\newsavebox\myboxA
\newsavebox\myboxB
\newlength\mylenA
\newcommand*\xoverline[2][0.75]{%
	\sbox{\myboxA}{$\m@th#2$}%
	\setbox\myboxB\null
	\ht\myboxB=\ht\myboxA%
	\dp\myboxB=\dp\myboxA%
	\wd\myboxB=#1\wd\myboxA
	\sbox\myboxB{$\m@th\overline{\copy\myboxB}$}
	\setlength\mylenA{\the\wd\myboxA}
	\addtolength\mylenA{-\the\wd\myboxB}%
	\ifdim\wd\myboxB<\wd\myboxA%
	\rlap{\hskip 0.5\mylenA\usebox\myboxB}{\usebox\myboxA}%
	\else
	\hskip -0.5\mylenA\rlap{\usebox\myboxA}{\hskip 0.5\mylenA\usebox\myboxB}%
	\fi}
\newcommand{\keywords}[1]{\par\noindent\textbf{Keywords:} #1}
\newtheorem{thm}{Theorem}[section]
\newtheorem{lem}[thm]{Lemma}
\newtheorem{pro}[thm]{Proposition}
\newtheorem{dfn}[thm]{Definition}
\newtheorem{rk}[thm]{Remark}
\newtheorem{claim}[thm]{Claim}
\newcommand{\norm}[1]{\left\Vert#1\right\Vert}
\def\cal#1{\fam2#1}
\def\bb{\begin}
\def\be{\begin{equation}}
	\def\ee{\end{equation}}
\def\bea{\begin{eqnarray}}
	\def\eea{\end{eqnarray}}
\def\beaa{\begin{eqnarray*}}
	\def\eeaa{\end{eqnarray*}}
\def\ifl{\iffalse}
\def\bb{\begin}
           \def\ea{\end{array}}
          \def\ec{\end{center}}
     \def\ed{\end{description}}
\def\be{\bb{equation}}        \def\ee{\end{equation}}
\def\bea{\bb{eqnarray}}       \def\eea{\end{eqnarray}}
\def\beaa{\bb{eqnarray*}}     \def\eeaa{\end{eqnarray*}}
 \def\et{\end{thebibliography}}
\def\bar2{\doublebar}
\begin{document}

\title{On the phenomenon of topological  chaos and statistical triviality}

\author[a]{Chao Liang} 
\author[b]{Xiankun Ren \thanks{ Corresponding author. \\Email address: chaol@cufe.edu.cn (C. Liang), xkren@cqu.edu.cn (X. Ren), sunwx@math.pku.edu.cn (W. Sun), 
		vargas@ime.usp.br (E. Vargas)}}
		\author[c]{Wenxiang Sun}
		\author[d]{Edson Vargas }
\affil[a]{School of Statistics and Mathematics, Central University of Finance and Economics, Beijing 100081, People's Republic of China}
\affil[b]{College of Mathematics and Statistics, Chongqing University, Chongqing 401331, People's Republic of China}
\affil[c]{School of Mathematical Sciences, Peking University, Beijing 100871, People's Republic of China}
\affil[d]{IME, University of Sao Paulo, Sao Paulo 05508-090, Brazil}
\date{}

\maketitle

\begin{abstract}
There exists a compact manifold so that the set of topologically chaotic but statistically trivial $C^{r} (1\leq r \leq \infty)$  vector fields  on this manifold 
displays considerable scale in the view of dimension. More specifically, it contains an infinitely dimensional connected subset.
\end{abstract}
\keywords{Topological chaos, statistical triviality,  time-changed flow}
\section{Introduction}

In the field of dynamical systems, both topological complexity and statistical complexity play important roles in understanding the behavior and properties of dynamical systems, but they address different aspects of complexity.

{\bf Topological complexity}  often refers to the intricacy of the phase space structure, including the presence of attractors, their basins of attraction, and the topology of trajectories. It is often characterized by utilizing topological entropy and certain trajectory properties such as minimality, topological mixing, topological transitivity and others.

{\bf Statistical complexity}  typically involves quantifying the degree of unpredictability or disorder within the system. It is often characterized by utilizing metric entropy of the partitioned representation, Lyapunov exponents and some properties about the measures supported on the system.

While topological complexity focuses on the geometric or qualitative structure of phase space dynamics, statistical complexity captures the statistical properties and information content of dynamical trajectories. These measures are often used {\bf in tandem} to gain a more comprehensive understanding of the dynamics of complex systems and in many cases, the presence of one type of complexity can lead to or coincide with the presence of the other due to the underlying dynamics of the system, leading to a correlation or interplay between the two.

In \cite{Herman}, Herman  constructs a real analytic diffeomorphism on a compact connected 4-dimensional manifold, such that it  is  minimal and its
metric entropy is positive (hence its topological entropy is  positive ). The Lorenz attractor and the H$\acute{e}$non map exhibit a similar correlation: it is both topologically complex and statistically complex.

However, in \cite{S-Y-Z} Sun-Young-Zhou constructed two equivalent ~ $C^r\,\,(r\geq 1)$~ smooth flows with a singularity, one of which has positive topological entropy while the other has zero topological entropy.
The flow with zero entropy is topologically transitive (topologically complex) and, at the same time, its unique   ergodic measure is  supported on a singularity (statistically trivial), which contradicts the correlation mentioned above. In \cite {H-Z} He-Zhou constructed a topological mixing (topologically complex)  homomorphism for which  the unique   ergodic measure is  supported on a fixed point (statistically trivial).
We will deal with the interesting  phenomenon  of  topological complexity and statistical triviality  
in the setting of  $C^r$  vector fields in this article.

Let $M$ be a compact Riemannian manifold and let  $\phi: M \times \mathbb{R} \rightarrow M $ be a continuous flow,  that is a continuous map satisfying ~$\,\, \phi(x, 0)=x$~ and ~$\,\,\phi(\phi(x, t_1), t_2)=\phi(x, t_1+t_2),$ for all  ~$ x\in M,\,\,t_1, t_2\in \mathbb{R}. $  
For~ $t\in \mathbb{R}$~ denote by $\phi_{t}: M \rightarrow M$ the  homeomorphism given by $\phi_{t}(x) = \phi(x, t),  x\in M$.
We call $\phi: M \times \mathbb{R} \rightarrow M$ a ~$C^r$~flow provided the map ~$\phi$~ is ~$C^r,$~ $r = 1, 2, \cdots,+\infty$.
A Borel probability measure $\mu$ is called $\phi_{t}$-{\itshape invariant} if for any Borel set $B$ one has $\mu(B) = \mu(\phi_t(B))$.   $\mu$ 
is called  $\phi$-{\itshape invariant}   if it is $\phi_t$-invariant for all $t\in\mathbb{R}$. A $\phi$-invariant measure $\mu$ is called {\it ergodic } with respect to $\phi$ if for any $\phi$-invariant Borel set $B$ it holds that $\mu(B)=0$ or $\mu(B)=1.$ Denote   $\mathcal{M}_{erg, \phi}$ the set of $\phi$-invariant  and $\phi$-ergodic Borel probabilty measures. 
We use \{{\itshape  fixed points}\}
to denote both the set of  fixed points of $\phi$  and the set of the atomic probability measures supported on fixed points. 
And we use the terminology \{{\itshape  singularities}\} similarly for a $C^r$ differential flow.

\begin{dfn}\label{def.top chaos and stat trivial}
	Let $\phi :\, M \times\mathbb{R} \to M$ be a continuous flow on a compact Riemnannian manifold. We say that $\phi$ is {\bf topologically chaotic but statistically trivial}, if  it is topologically transitive meaning the existence of  a dense orbit and it
	has a single fixed point ~$p$~ such that  $\mathcal{M}_{\phi, erg}= $\{{\itshape  fixed point  p}\}.~ 
\end{dfn}

There could be  various ways to define  topological chaos  but statistical triviality, among them is to define it  by using topological mixing together with zero entropy. What situation  we
deal with in the article is specifically
Definition \ref{def.top chaos and stat trivial}. A natural problem is the following.

{\it Whether the system  with topological chaos and statistical triviality is an exceptional example or such ones   display considerable scale in the view of dimension ? }

\bigskip

We denote by ${\cal{X}}^r(M)$ the set of all $C^r$ vector fields on $M$, and denote by ${\cal{T}}^r(M)$ the subset of ${\cal{X}}^r(M)$ consisting of all $C^r$ vector fields whose induced flows are topologically chaotic but statistically trivial, where $r = 1, 2, \cdots, +\infty$. One conjecuture
of Palis \cite{P} says the following.

\begin{quote}
	{\it 	There exists a $C^1$ open and dense subset $\mathbb{O}\subset {\cal{X}}^1(M)$
		such that any vector field in $\mathbb{O}$ either is Morse-Smale or has a transverse homoclinic intersection.}
\end{quote}

The $C^1$ diffeomorphism version of this conjecture has been proved by Pujals-Sambarino \cite{P-S} in dimension 2, by Bonatti-Gan-Wen \cite{B-G-W} in dimension 3 and by Crovisier \cite{C} in higher dimension. Note that the presence of a transverse homoclinic intersection implies the existence of the horseshoe, which in turn indicates that the system possesses positive topological entropy. And Morse-Smale systems are either Anosov, thus possessing positive entropy, or non-Anosov, thereby lacking dense orbits. Thus the systems in ${\cal{T}}^r(M)$  are openly-and-densely excluded from this conjecture when $r=1$.

However, we will show in this paper that there exists a manifold $M$ so that  ${\cal{T}}^r(M)$ displays considerable scale in the view of dimension, where ~$r\geq 1$~. 

\begin{thm}[Main Theorem]\label{Main thm}
There exists a compact smooth manifold $M$ with the following property.  
For $ r=1,2,\cdots,+\infty$  there is ${\cal{A}}\subseteq  {\cal{T}}^{r}(M)$ such that
\begin{itemize}
\item[(1)] Each $Y\in\cal{A}$ is topologically chaotic but statistically trivial;
\item[(2)] $\cal{A}$ is connected and  the dimension of $\cal{A}$ is infinite. 
\end{itemize}
\end{thm}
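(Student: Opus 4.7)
The plan is to produce $\mathcal{A}$ as an infinite-dimensional family of positive time changes of a single base vector field $X_0 \in \mathcal{T}^r(M)$. The keyword ``time-changed flow'' in the abstract already signals this approach, and the base example is essentially the one exhibited by Sun-Young-Zhou in \cite{S-Y-Z}: a $C^r$ vector field $X_0$ on a suitable compact manifold $M$ whose induced flow is topologically transitive, has a single zero $p$, and whose only ergodic invariant probability measure is $\delta_p$. Taking such an $X_0$ as given, I would set
\begin{equation*}
\mathcal{A} := \{\rho X_0 : \rho \in C^r(M,\mathbb{R}_{>0})\}
\end{equation*}
and verify the two conclusions of the theorem for this family.

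To check clause (1), i.e.\ that each $Y_\rho=\rho X_0$ lies in $\mathcal{T}^r(M)$, I would argue as follows. Since $\rho>0$ everywhere, $Y_\rho$ and $X_0$ have the same zero set $\{p\}$, and the oriented orbits of the flow of $Y_\rho$ coincide with those of $X_0$; in particular the dense orbit of $X_0$ remains a dense orbit of $Y_\rho$, and $p$ remains the unique fixed point. For ergodic measures I would use the classical bijection between $\phi^{X_0}$-invariant Borel probabilities $\nu$ and $\phi^{Y_\rho}$-invariant Borel probabilities, which sends $\nu$ to $\rho\nu / \int \rho\,d\nu$ and preserves ergodicity because it preserves invariant sets. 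Under this correspondence $\delta_p$ is fixed (since $\rho(p)>0$), and since $\delta_p$ is the only $X_0$-ergodic measure, it must also be the only $Y_\rho$-ergodic measure. Hence $Y_\rho\in\mathcal{T}^r(M)$.

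For clause (2), observe that $C^r(M,\mathbb{R}_{>0})$ is an open convex cone inside the infinite-dimensional Fr\'echet space $C^r(M,\mathbb{R})$, hence it is path connected and of infinite topological dimension. The map $\rho\mapsto\rho X_0$ is linear and continuous, and it is injective because $X_0$ is nonzero on the dense open set $M\setminus\{p\}$ (any two continuous functions agreeing there are equal). Consequently $\mathcal{A}$ is connected and contains the continuous injective image of balls $\rho_0+\sum_{i=1}^N t_i\psi_i$ of arbitrary finite dimension, where $\psi_1,\dots,\psi_N\in C^r(M,\mathbb{R})$ are linearly independent and $|t_i|$ is small enough to preserve positivity, establishing infinite dimension.

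The step I expect to be the real obstacle is not the time-change machinery, which is soft, but the existence of the base example $X_0$ of class $C^r$ for every $r\in\{1,2,\dots,\infty\}$: one needs a smooth vector field with a dense orbit whose only ergodic invariant probability is supported at its single singularity. This demands precise control of the rate at which orbits approach $p$, so that the Birkhoff time averages of every continuous function collapse to the value at $p$ on a set of full measure for every invariant probability, while at the same time a specific orbit visits every open set. Reproducing the Sun-Young-Zhou construction (or a suitable adaptation) with enough regularity, and ensuring it is robust enough that the whole time-changed family remains in $\mathcal{T}^r(M)$, is where the genuine work lies.
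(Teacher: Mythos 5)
Your approach is correct, and it is genuinely different from — and in places considerably cleaner than — the paper's. You take a base field $X_0\in\mathcal{T}^r(M)$ (which both you and the paper ultimately build via the Sun--Young--Zhou / Herman suspension construction, so there is no disagreement on where the hard work lies) and form $\mathcal{A}=\{\rho X_0:\rho\in C^r(M,\mathbb{R}_{>0})\}$. The key is that for a \emph{strictly positive} time change the invariant probability measures of the two flows are in canonical bijection $\mu\leftrightarrow\rho^{-1}\mu/\int\rho^{-1}\,d\mu$ (your displayed formula is the inverse direction, but since it is a bijection this is harmless), the bijection preserves ergodicity because the two flows have identical invariant sets, and $\delta_p$ is a fixed point of this bijection since $\rho(p)>0$. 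Statistical triviality of every $\rho X_0$ is therefore immediate, with no further dynamical estimate needed; and since $C^r(M,\mathbb{R}_{>0})$ is an open convex cone in $C^r(M,\mathbb{R})$ and $\rho\mapsto\rho X_0$ is a continuous linear injection (by density of $M\setminus\{p\}$), connectedness and infinite topological dimension of $\mathcal{A}$ fall out of convexity and the image of arbitrarily large finite-dimensional simplices.

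The paper proceeds differently: its family $\mathcal{A}(X)$ consists of time changes by factors $\alpha^\theta$ that \emph{vanish} at $p$ (indeed faster than quadratically), so the classical bijection of invariant measures does not apply — it relies on $\rho$ being bounded away from $0$ and $\infty$, which fails for their reparameterizations near $p$. Consequently the paper must verify statistical triviality of each $Y\in\mathcal{A}(X)$ by hand, via the attracting-center characterization (Lemma \ref{lem.zhou}) and the two Claims in Step 2: they estimate the reparameterized time $\theta(x,t)$ using $\|Y\|\le\|X\|$ to control $\liminf\theta(x,t)/t$, then compare visit times to shrinking balls around $p$. This is the genuine content of the paper's Step 2, and your proposal sidesteps it entirely by restricting to nonvanishing reparameterizations. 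What the paper's route buys is a family that is qualitatively broader (their time changes slow the flow down near $p$ even further than $X$ already does, so they are not absolutely-continuous time changes in the usual sense); what your route buys is that clause (1) reduces to a soft classical fact and clause (2) to convexity, with no estimates at all. For the theorem as stated, your simpler family suffices, so the proposal is a valid — arguably preferable — proof once the base example $X_0$ is in hand, which you correctly flag as the only real obstacle.

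One small caveat worth noting: for your measure bijection you should state that $\rho$ being continuous and strictly positive on the compact $M$ guarantees $0<\inf\rho\le\sup\rho<\infty$, so that $\rho^{\pm1}\mu$ are finite and mutually absolutely continuous with $\mu$; this is what makes the argument self-contained even in the presence of the singularity at $p$.
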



\section{Preliminaries}

In this section, we prepare some notions and lemmas.

\subsection{Attracting center}

\begin{dfn}
	A subset $C \subseteq M$ is called an {\bf  attracting center} for a $C^r$ flow $\phi:\, M \times\mathbb{R}\to M$, if
	$$\lim_{\delta\to0}\liminf_{t\to+\infty}\,\frac{1}{t}\int_{0}^{t}\chi_{B(C,\delta)}(\phi_sx) ds = 1,\,\forall x\in M,$$
	where $B(C, \delta) = \{y \in M \,\, |\,\,  \exists \ x\in C \ s.t., d(x, y) < \delta\}$ and $\chi$ is the characteristic function of a set. 
\end{dfn}

\begin{dfn}
	An attracting center $C$ is {\bf minimal} if
	
	\begin{enumerate}

		\item[(1)] it is compact, and
		\item[(2)] $\phi_t (C)=C,\,\forall t\in\mathbb{R}$ and
		\item[(3)]  there is no proper subset of $C$ satisfying both $(1)$ and $(2)$.
	\end{enumerate}
	
\end{dfn}

The concept of minimal attracting center was introduced by Beboutov and
Stepanov in \cite{B-S}. The following lemma is  one characterization of minimal attracting center which was given by Zhou \cite{zhou}.  

\begin{lem}(\cite{zhou})\label{lem.zhou}
	For a $C^r$ flow $\phi:\, M\times\mathbb{R}\to M$ on a compact manifold $M$, the minimal attracting center  $C$ coincides with the closure of the union of supports of $\phi-$invariant ergodic measures, i.e.
	$$C = closure\left\{\bigcup_{\mu\in{\cal{M}_{{\text{erg}},\phi}}} Supp(\mu)\right\}.$$
\end{lem}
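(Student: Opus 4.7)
Let $S := \overline{\bigcup_{\mu \in \mathcal{M}_{\mathrm{erg},\phi}} \mathrm{Supp}(\mu)}$. The plan is to prove the two inclusions $S \subseteq C$ and $C \subseteq S$: the first by applying Birkhoff's ergodic theorem to characteristic functions of small neighbourhoods of $C$, and the second by verifying that $S$ itself is a compact, $\phi$-invariant attracting center and then invoking the minimality of $C$.

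For $S \subseteq C$, fix an ergodic measure $\mu$ and $\delta > 0$. The indicator $\chi_{B(C,\delta)}$ is bounded and measurable, so Birkhoff's theorem for the flow $\phi$ gives that for $\mu$-a.e.\ $x$,
$$\lim_{t \to \infty} \frac{1}{t} \int_0^t \chi_{B(C,\delta)}(\phi_s x)\,ds \;=\; \mu(B(C,\delta)).$$
Since $C$ is an attracting center, for any $\epsilon > 0$ this limit is at least $1 - \epsilon$ whenever $\delta$ is sufficiently small; hence $\mu(B(C,\delta)) \geq 1 - \epsilon$. Letting $\delta \to 0$ and using that $C$ is closed gives $\mu(C) = 1$, so $\mathrm{Supp}(\mu) \subseteq C$. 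Taking the union over all ergodic $\mu$ and then the closure yields $S \subseteq C$.

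For the reverse inclusion, I verify that $S$ is an attracting center; combined with $S \subseteq C$, the minimality of $C$ then forces $S = C$. Compactness of $S$ is immediate, and $\phi$-invariance follows because each $\mathrm{Supp}(\mu)$ is $\phi_t$-invariant and $\phi_t$ is a homeomorphism. Suppose the attracting-center property fails at some $x \in M$: then there exist $\epsilon > 0$ and arbitrarily small $\delta > 0$ with $\liminf_{t \to \infty} \nu_t^x(B(S,\delta)) < 1 - \epsilon$, where $\nu_t^x := \frac{1}{t}\int_0^t \delta_{\phi_s x}\,ds$. Choose times $t_n \to \infty$ along which this liminf is realized, extract a weakly convergent subsequence $\nu_{t_n}^x \to \mu$, and note that $\mu$ is $\phi$-invariant by standard Krylov--Bogolyubov arguments. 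Its ergodic decomposition $\mu = \int \mu_\omega\,d\rho(\omega)$ consists of ergodic components each with $\mathrm{Supp}(\mu_\omega) \subseteq S$, so $\mu(S) = 1$ and in particular $\mu(M \setminus B(S,\delta)) \leq \mu(M \setminus S) = 0$. Yet the Portmanteau theorem applied to the closed set $M \setminus B(S,\delta)$ gives
$$\mu(M \setminus B(S,\delta)) \;\geq\; \limsup_{n \to \infty} \nu_{t_n}^x(M \setminus B(S,\delta)) \;>\; \epsilon,$$
a contradiction.

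The main obstacle is the second inclusion, where one has to convert the analytic failure of a liminf-of-time-averages condition into a genuine weak-$*$ accumulation point of empirical measures carrying positive mass outside $S$. The two ingredients that make this work are Portmanteau on the correct side (closed sets with $\limsup$) applied to the complement $M \setminus B(S,\delta)$, together with the ergodic decomposition for continuous flows on a compact metric space, which transfers the support constraint from ergodic measures to all invariant measures.
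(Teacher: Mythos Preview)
The paper does not prove this lemma; it is quoted from Zhou \cite{zhou} and used as a black box. Your argument is correct and is essentially the standard proof: one inclusion via Birkhoff's theorem (any ergodic measure gives full mass to every compact invariant attracting center, hence its support lies inside), and the other by showing $S$ is itself a compact invariant attracting center via a Krylov--Bogolyubov limit, ergodic decomposition, and Portmanteau on the closed complement $M\setminus B(S,\delta)$. One small remark: your first inclusion in fact shows $S\subseteq C$ for \emph{every} compact invariant attracting center $C$, not just the minimal one, so together with the second step you are simultaneously proving existence and uniqueness of the minimal attracting center and identifying it with $S$; this is slightly stronger than the way you phrased it.
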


\subsection{Time reparameterization}

Two continuous flows  $\phi,\,\,\psi: M\times \mathbb{R}\to M$
are equivalent if there exists a homeomorphism ~  $\pi: M\to M$~  preserving  time orientation  such that

$$\pi (Orb(x, \phi))=Orb(\pi(x), \psi),\quad \forall x\in M.$$                 Two equivalent continuous flows
 can be expressed in term of each other through time reparameterization.

\begin{lem}\label{lem:1}
Let  two continuous  flows $\phi,\,\,\psi: M\times \mathbb{R}\to M$ be equivalent. 
	Set $M_0=\{fixed \,\, points \,\,of\,\, \phi \}.$ Then there exists a
	continuous function $\theta (x,t),\,\,x\in M\setminus M_0,\,\, t\in
	\mathbb{R}$ such that
	\begin{enumerate}
		\item $\theta(x,0)=0$ and $\theta(x,\cdot): \mathbb{R}\to \mathbb{R} $ is
		strictly increasing for any $x\in M\setminus M_0$;
		\item  $\theta(x, s+t)= \theta(x, s)+\theta(\phi_{s}(x), t);\ \forall s,t \in \mathbb{R},\ \forall x\in M\setminus M_0$; 
		\item $\pi \circ \phi_t(x)=\psi_{\theta(x, t)}\circ \pi(x), \ \forall t \in \mathbb{R},\ \forall x\in M\setminus M_0$.
	\end{enumerate}

\end{lem}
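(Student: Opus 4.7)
The statement is an identification/extraction result: one needs to extract a continuous reparametrizing cocycle $\theta$ from an abstract orbit equivalence $\pi$ between two flows. My plan is to use the obvious \emph{forced} definition coming from (3) and then check the algebraic properties and, most substantially, joint continuity.

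\textbf{Step 1: definition via path-lifting.} Fix $x \in M \setminus M_0$ and set $y = \pi(x)$. Because $\pi$ maps orbits to orbits and the $\phi$-orbit of $x$ is not a single point, $y$ is not a $\psi$-fixed point either. The continuous curve $\gamma_x(t) := \pi(\phi_t(x))$ lies on the $\psi$-orbit of $y$, and the continuous map $\sigma \mapsto \psi_\sigma(y)$ is a local homeomorphism at every $\sigma \in \mathbb{R}$ because every point on this orbit admits a flow-box neighborhood (local transversal section) from which the time parameter can be recovered continuously. Standard path-lifting along this local homeomorphism then produces a \emph{unique} continuous function $\theta(x,\cdot)\colon \mathbb{R} \to \mathbb{R}$ satisfying $\theta(x,0)=0$ and $\psi_{\theta(x,t)}(\pi(x))=\pi(\phi_t(x))$, which is precisely (3).

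\textbf{Step 2: algebraic properties.} Property (3) and the initial condition $\theta(x,0)=0$ in (1) are built into the construction. Strict monotonicity in $t$ is exactly the statement that $\pi$ preserves time orientation on the orbit of $x$, which is part of the definition of equivalence. For the cocycle identity (2), apply $\pi$ to both sides of $\phi_{s+t}(x)=\phi_t(\phi_s x)$ and use (3) twice to obtain
\[
\psi_{\theta(x,s+t)}(\pi(x)) \;=\; \psi_{\theta(\phi_s x,\,t)}\bigl(\psi_{\theta(x,s)}(\pi(x))\bigr) \;=\; \psi_{\theta(x,s)+\theta(\phi_s x,\,t)}(\pi(x)).
\]
Both expressions are continuous lifts (in $t$, with $s$ fixed) of the path $r \mapsto \pi(\phi_{s+r}(x))$ that agree at $r=0$, so uniqueness of the lift along the local homeomorphism $\sigma \mapsto \psi_\sigma(\pi(x))$ forces $\theta(x,s+t)=\theta(x,s)+\theta(\phi_s x,t)$.

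\textbf{Step 3: joint continuity -- the main obstacle.} This is the only non-formal part of the proof. Fix $(x_0,t_0) \in (M \setminus M_0) \times \mathbb{R}$, let $y_0=\pi(x_0)$ and $s_0=\theta(x_0,t_0)$, and note that neither $\pi(x_0)$ nor $\pi(\phi_{t_0}x_0)=\psi_{s_0}(y_0)$ is a $\psi$-fixed point. Choose flow-box neighborhoods $W$ of $y_0$ and $V$ of $\psi_{s_0}(y_0)$ carrying continuous local-time coordinates $\tau'\colon W \to (-\delta,\delta)$ and $\tau\colon V \to (-\delta,\delta)$ vanishing at $y_0$ and $\psi_{s_0}(y_0)$ respectively. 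By continuity of $\pi$ and $\phi_t$, for $(x,t)$ sufficiently close to $(x_0,t_0)$ one has $\pi(x) \in W$ and $\pi(\phi_t x) \in V$. Applying the cocycle identity (2) to split $\theta(x,t)$ as a finite sum of time increments across a fixed finite chain of flow boxes covering the compact orbit arc $\{\psi_\sigma(y_0):\sigma \in [0,s_0]\}$, each such increment matches the corresponding local time coordinate (by uniqueness of the lift within a single flow box), and each is continuous in $(x,t)$. Summing yields continuity of $\theta$ at $(x_0,t_0)$.

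The principal technical hurdle is the bookkeeping in Step 3: one must verify that for $(x,t)$ close to $(x_0,t_0)$ the intermediate lifted points stay inside the prescribed chain of flow boxes, so that the local-time coordinates genuinely record the additive cocycle $\theta$. Once this is done, the entire argument reduces to finitely many applications of continuity of $\pi$, $\phi$, $\psi$, and of the flow-box coordinates.
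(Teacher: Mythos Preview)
The paper does not prove this lemma itself; the entire proof reads ``We refer to \cite{Rohlin} for a proof.'' So there is no argument in the paper to compare yours against, and your write-up supplies considerably more than the paper does.

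Your argument is essentially correct. The one point worth tightening is the phrase in Step~1 calling $\sigma \mapsto \psi_\sigma(y)$ a ``local homeomorphism'': this map is not a local homeomorphism from $\mathbb{R}$ into $M$ (the dimensions differ), so the textbook path-lifting theorem does not apply verbatim. What actually makes the lift exist and be unique is the flow-box structure together with the observation that the $\psi$-orbit of $y$ meets any local transversal in an at most countable, hence totally disconnected, set; consequently any continuous path in $M$ whose image lies in $\Orb(y,\psi)$ must, within each flow box, remain on a single local orbit arc and therefore acquires a well-defined local time increment. You implicitly invoke exactly this in Step~3 (``uniqueness of the lift within a single flow box''), but it is already the mechanism behind Step~1, and stating it there removes the only imprecision. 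Steps~2 and~3 then go through as you outline; the bookkeeping you flag in Step~3 is routine once a partition $0=r_0<\cdots<r_N=t_0$ and covering flow boxes are fixed, since uniform continuity of $\pi$ and $\phi$ keeps the nearby arcs inside the same boxes.
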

\begin{proof}
	We refer to  \cite{ Rohlin} for a proof.
\end{proof}

We call $\theta(x,t): M\setminus M_0 \times \mathbb{R} \rightarrow \mathbb{R}$ a {\it reparameterized time} while transferring $\phi$ to $\psi$ and intuitively
call $\pi$ the {\it transfer map}.

On the other side,  given a flow defined on $M$ and a reparameterized time function $\theta(x,t)$, we can get a related flow.
Let  $\phi: M\times \mathbb{R}\to M$ be a continuous flow 
and let $M_0$ denote the set of
fixed points of $\phi$. Let $\theta (x,t),\,\,x\in M\setminus M_0,\,\,
t\in \mathbb{R}$ be a continuous function satisfying the following:

\begin{enumerate}
	\item $\theta(x,0)=0$ and $\theta_x=\theta(x, .): \mathbb{R}\to
	\mathbb{R} $  is strictly increasing $\forall x\in M\setminus M_0$;   
	
	\item $\theta(x, s+t)= \theta(x, s)+\theta(\phi_{s}(x), t),$ $\forall
	s,\,\, t\in \mathbb{R},\,\,\forall x\in M\setminus M_0.$
\end{enumerate}
Define a flow  $\psi : M\times \mathbb{R}\to M,$
\begin{gather}
		  \psi_{\theta(x,t)}(x) = \phi_{t}(x) ,\quad \forall x\in M\setminus M_{0},\quad t\in \mathbb{R}, \nonumber\\
	\psi_t(x)=x,\,\,\forall x\in M_0,\,\,\forall t \in \mathbb{R}. \nonumber
\end{gather}
We call
$\psi$ to be a {\itshape time-changed flow} from $\phi$ by $\theta(x,t)$. The flows  $(M,
\phi)$ and $(M, \psi)$ are clearly equivalent with transfer
map $\pi:=id: M\to M$. If further $\phi   $ is  $C^r$  $(r\geq 1)$  flows induced by vector   fields $X,$  the function  $\theta(x,
t),\,\,x\in M\setminus M_0,\,\,t\in \mathbb{R}$ is $C^r$
differentiable in both variables, then we call $\psi $   a {\itshape natural
time-changed flow} from $\phi$ by $\theta(x,t). $ In this case there
is a $C^r$ vector field $Y$ on $M$ such that $\psi$ is induced by
$Y.$
The following proposition provides an integral-type description for the reparameterized time function $\theta(x,t)$.
\begin{pro}(\cite[Proposition 4.1]{S-S-V})\label{pro:3}  For a natural time-changed flow from $\phi$ to $\psi$ by
	$\theta(x,t)$, it holds
	$$\theta(x, t)=\int_0^t \frac{\| X(\phi_sx)\| }{\|Y(\phi_s x)\|}\, ds, \,\,\,\, \forall t\in \mathbb{R},\,\,\,\,x\in M\setminus M_0,$$
where $  M_0=\{  \text {Singularities of }  \phi     \}.$

\end{pro}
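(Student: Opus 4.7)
The plan is to bootstrap from a single known example and inflate it into an infinite-dimensional connected family of vector fields inside $\mathcal{T}^{r}(M)$ via natural time reparameterizations. The seed is the construction of Sun--Young--Zhou cited in the introduction: on some compact smooth manifold $M$ there is a $C^r$ vector field $X_0$ whose induced flow $\phi$ is topologically transitive, possesses a single singularity $p$, and satisfies $\mathcal{M}_{erg,\phi}=\{\delta_p\}$, so $X_0\in\mathcal{T}^{r}(M)$. I will keep this $M$ fixed throughout and use $X_0$ as the base point of the family.

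For every $h\in C^r(M,(0,\infty))$ I would set $Y_h=hX_0$ and let $\psi^{h}$ be the flow it induces. Because $h$ is strictly positive and of class $C^r$, the field $Y_h$ shares the singular set $\{p\}$ with $X_0$ and its integral curves coincide, as unparameterized curves, with those of $\phi$. Proposition \ref{pro:3} then identifies $\psi^{h}$ as a natural time-changed flow of $\phi$ with reparameterization
\[
\theta^{h}(x,t)=\int_{0}^{t}\frac{\|X_0(\phi_s x)\|}{\|Y_h(\phi_s x)\|}\,ds=\int_{0}^{t}\frac{1}{h(\phi_s x)}\,ds,
\]
so that $\psi^{h}$ and $\phi$ are orbit-equivalent through the identity transfer map.

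Next I would verify that $Y_h\in\mathcal{T}^{r}(M)$ for every admissible $h$. Topological transitivity transfers immediately, because the two flows share the same orbits and hence a dense $\phi$-orbit is automatically a dense $\psi^{h}$-orbit. For the measure-theoretic side, every $\psi^{h}$-invariant Borel set is $\phi$-invariant, and away from the singularity there is a standard bijection between $\phi$- and $\psi^{h}$-invariant probability measures given by $d\mu = c\,h\,d\nu$ with $c$ a normalizing constant, which fixes any atomic mass at the common singularity $p$. If $\nu$ were a $\psi^{h}$-invariant probability measure with $\nu(\{p\})<1$, its conditional on $M\setminus\{p\}$ would produce, after multiplication by $h$ and renormalization, a $\phi$-invariant probability measure on $M\setminus\{p\}$, contradicting $\mathcal{M}_{erg,\phi}=\{\delta_p\}$. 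Hence $\nu=\delta_p$ and $\mathcal{M}_{erg,\psi^{h}}=\{\delta_p\}$, which is exactly the statistical triviality clause.

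For part $(2)$, I take $\mathcal{A}=\{hX_0 : h\in C^r(M,(0,\infty))\}\subseteq \mathcal{X}^{r}(M)$. The map $h\mapsto hX_0$ is continuous, and because $X_0$ vanishes only at the isolated point $p$, it is injective. Its domain is an open convex cone inside the infinite-dimensional vector space $C^r(M,\mathbb{R})$, hence is path-connected and of infinite dimension, and both properties push forward to $\mathcal{A}$. The main technical hurdle I anticipate is the invariant-measure correspondence under a general smooth positive time change: one must handle the singularity $p$ carefully so that no extra ergodic measure is introduced by the $h$-rescaling, and one must confirm that the Sun--Young--Zhou seed can indeed be arranged in each regularity class $r=1,2,\dots,\infty$ as claimed in the introduction. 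Everything else is a soft topological argument about the space $C^r(M,(0,\infty))$.
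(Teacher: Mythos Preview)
Your proposal does not address Proposition~\ref{pro:3} at all. That proposition is the integral formula
\[
\theta(x,t)=\int_0^t \frac{\|X(\phi_s x)\|}{\|Y(\phi_s x)\|}\,ds
\]
for the reparameterized time of a natural time change, and the present paper does not prove it but merely quotes it from \cite[Proposition~4.1]{S-S-V}. What you have actually written is a proof strategy for the Main Theorem (Theorem~\ref{Main thm}); nothing in your text establishes, or even attempts to establish, the displayed identity above.

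Read as an approach to Theorem~\ref{Main thm}, your argument takes a genuinely different and in fact simpler route than the paper's Step~2. The paper multiplies the seed field $X$ by smooth functions that \emph{vanish} at the singularity $p$; the resulting time change is singular at $p$, the classical bijection between invariant measures of orbit-equivalent flows is unavailable, and the paper must instead invoke the attracting-center characterization (Lemma~\ref{lem.zhou}) together with the two Claims in Step~2 to show that $\{p\}$ remains the minimal attracting center after the time change. You multiply by strictly positive $h\in C^r(M,(0,\infty))$, so $h$ and $1/h$ are bounded on the compact manifold and the standard correspondence $\nu\mapsto c\,h\,d\nu$ between $\psi^{h}$-invariant and $\phi$-invariant probability measures is a genuine bijection; statistical triviality then transfers with no attracting-center argument. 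Connectedness and infinite dimensionality of $\mathcal{A}=\{hX_0:h>0\}$ follow from convexity of $C^r(M,(0,\infty))$ and injectivity of $h\mapsto hX_0$ (since $X_0$ vanishes only at $p$). What the paper's choice buys is that its perturbations are compactly supported near $p$ and agree with $X$ outside a fixed chart; your family is global but the argument is shorter.
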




\section{Proof of Theorem\ref{Main thm}}

We divide the proof into two steps.   \\

{\bf Step 1} Costruction of a manifold and a $C^{r}$ flow on it which is topologically chotic but statistically trival.

Let  $M_1 = SL(2,\mathbb{R})/\Gamma$ with normalized Lebesgue measure $\sigma$, where 
$\Gamma $ is a compact discrete subgroup of $SL(2,\mathbb{R})$. Let ~$\mathbb{T}  $~denote the unit circle  and  consider an analytic    diffeomorphism
\begin{align*}
	F_{\chi}: \mathbb{T} \times M_1 &\rightarrow   \mathbb{T} \times M_1 \\
	(\theta, \beta) &\mapsto (\theta+\chi, \,  A_{\theta}\cdot \beta),
\end{align*}
where $A_{\theta} = \begin{bmatrix} \cos 2\pi \theta & \sin 2\pi \theta \\ \sin 2\pi \theta & \cos 2\pi \theta\end{bmatrix} \cdot \begin{bmatrix}
	\lambda & 0 \\ 0 & 1/\lambda\end{bmatrix}$ and  $\lambda>1$  is  fixed. 
Herman showed  in  \cite{Herman} that there is $\chi_0\in \mathbb{T}$ such that $F_{\chi_{0}}$ is minimal and furthermore the metric entropy of $F_{\chi_{0}}$ with respect to $\sigma$ is strictly positive. By the variational principle, the topological entropy is also strictly positive. 
Denote $M= \mathbb{T} \times M_1  $ and  $ f = F_{\chi_{0} }: M  \rightarrow M.$
Define $$\Omega= M   \times [0,1]/ \sim,$$ where $\sim$ is the equivalence  relation on $M\times  [0,1] $  identificating  $(y,1)$ with $(f(y),0).$  
We mention that $dim( M_1) = 4$ and $dim(\Omega)=5$.

The {\itshape standard suspension}  of $f$  is the flow $\psi$  on $ \Omega$ defined by 
$\psi_{t}(y,s) = (y,t+s)$, provided $0\leq t+s < 1.$ 
Then  $\Omega$ is a  compact Riemannian manifold and $\psi$ is an analytic flow.  Since $f$ is minimal, the flow $\psi$ is also a minimal flow.  Denote by $X$ the vector field on $\Omega$  which induces $\psi$. For any function $\alpha\in C^{\infty}(\Omega,[0,1])$, note that $\alpha X$ is a $C^{\infty}$ vector field on $\Omega$ and it thus induces a differentiable flow on $\Omega$.  Throughout the rest of the proof, we fix one point $p = (x_0,0) \in \Omega$ and consider a function $\alpha$ satisfying the following criteria.\\

{\bfseries (H).} Assume that $\alpha$ satisfies that 
\begin{itemize}
	\item[1)] $\alpha(p) = 0$ and $\alpha(q) >0$ for $q\in \Omega\setminus\{p\}$;
	\item[2)]  There is a small neighborhood $V\subset \Omega$ of $p$ such that $\alpha(q)  \equiv 1$ for any $q \in \Omega\setminus V$.
\end{itemize}

For $\epsilon>0$, we will denote balls (centered at $x$) in $M$ and in $\Omega$  by $$B_{M}(x,\epsilon) = \{y\in M\,\,|\,\, d(x,y) < \epsilon\}$$ and  $$B_{\Omega}(x,\epsilon) = \{y\in \Omega \,\,|\,\, d(x,y)< \epsilon\},$$ respectively. And denote the ball in $\mathbb{R}^{n}$ at the origin with radius $r>0$ by 
$$ B^{n}(r) = \{x\in \mathbb{R}^n\,\, |\,\, \|x\| <r\}.$$

We will construct a specific  $\alpha \in C^{\infty}(\Omega,[0,1])$ such that the flow induced  by $\alpha X$ will be transitive and will have only  one ergodic measure,  which is  atomic.

The following lemma is \cite[Lemma 2.4]{S-Y-Z} which determines  lower bounds on the measures of sets with respect to an ergodic measure in a minimal system.
\begin{lem}\label{lenth}
	Let $M$ denote a smooth compact Riemannian manifold and $f : M \rightarrow M$ be a  minimal homeomorphism.
Suppose $\mu$ is an ergodic measure of $f$. Then for any $\epsilon>0$, there
	is $L(\epsilon) \in \mathbb{N}$ such that 
	\[
	\mu(B_{M}(x,\epsilon)) \geq \frac{1}{L(\epsilon)}, \ \forall x\in M, \,\, \epsilon>0.
	\] 
\end{lem}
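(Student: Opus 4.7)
The plan is to combine compactness, the uniform return-time property of minimal homeomorphisms, and $f$-invariance of $\mu$. The key intermediate fact is that if $f$ is a minimal homeomorphism of a compact metric space and $U \subset M$ is a nonempty open set, then there exists $N(U) \in \mathbb{N}$ with
\[
M \;=\; \bigcup_{n=0}^{N(U)} f^{-n}(U).
\]
This is immediate from minimality (every forward orbit enters $U$, so $M = \bigcup_{n\ge 0} f^{-n}(U)$) and compactness applied to this open cover.

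Given $\epsilon>0$, I would first use compactness to pick a finite $\epsilon/2$-net $\{x_1,\dots,x_N\}\subset M$ so that $M=\bigcup_{i=1}^N B_M(x_i,\epsilon/2)$. Applying the observation above to each of the finitely many open sets $B_M(x_i,\epsilon/2)$, I obtain integers $N_1,\dots,N_N$ with $M=\bigcup_{n=0}^{N_i} f^{-n}(B_M(x_i,\epsilon/2))$ for each $i$, and set $L(\epsilon) := 1+\max_{1\le i\le N} N_i$.

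Now fix any $x\in M$. Choose some $x_i$ in the net with $d(x,x_i)<\epsilon/2$; the triangle inequality gives $B_M(x_i,\epsilon/2)\subset B_M(x,\epsilon)$, and therefore
\[
M \;=\; \bigcup_{n=0}^{N_i} f^{-n}\bigl(B_M(x_i,\epsilon/2)\bigr)\;\subset\; \bigcup_{n=0}^{L(\epsilon)-1} f^{-n}\bigl(B_M(x,\epsilon)\bigr).
\]
Since $\mu$ is $f$-invariant and $\mu(M)=1$, subadditivity yields
\[
1 \;\le\; \sum_{n=0}^{L(\epsilon)-1}\mu\!\left(f^{-n}(B_M(x,\epsilon))\right) \;=\; L(\epsilon)\,\mu(B_M(x,\epsilon)),
\]
so $\mu(B_M(x,\epsilon))\ge 1/L(\epsilon)$, as required. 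Crucially $L(\epsilon)$ depends only on $\epsilon$ (through the $\epsilon/2$-net and the numbers $N_i$), not on $x$.

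The only subtle point, and the one that forces the argument to go via an $\epsilon/2$-net rather than a direct appeal to $B_M(x,\epsilon)$ itself, is obtaining the \emph{uniformity in $x$}: a priori the return-time bound for $B_M(x,\epsilon)$ depends on $x$, and one needs to replace these $x$-dependent balls by a finite list of fixed balls in order to take a maximum. Ergodicity of $\mu$ is not actually needed here; only $f$-invariance and minimality of $f$ enter. Observe also that this uses no smooth structure on $M$, merely compactness of the metric space.
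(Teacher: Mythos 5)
Your proof is correct; the paper does not give a proof of this lemma but merely cites it as Lemma~2.4 of \cite{S-Y-Z}, and your route---uniform return time to any fixed open set in a minimal system via compactness, made uniform in the center $x$ by passing to a finite $\epsilon/2$-net, then $f$-invariance and subadditivity of $\mu$---is the standard argument for this fact. Your aside that ergodicity is never used, only $f$-invariance and minimality, is also accurate.
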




Let ~$$\pi:  M\times \mathbb{R}\to \Omega= M   \times [0,1]/ \sim  $$  denote the quotient map.  
The lemma below plays an important role in the construction of $\alpha$.
\begin{lem}[Lemma 3.1 of \cite{S-Y-Z}] \label{exifunofpr}
	For a given sequence of positive numbers 
	$$1 = \beta_{-1} > \beta_0> \beta_1 >  \beta_2 >  \cdots,$$ there is a $C^{\infty}$ function $\omega: B^{n}(2) \rightarrow [0,1]$ such that 
	\begin{itemize}
		\item[1)]  $\omega(x) = 0$ if and only if $x=0$;
		\item [2)] $|\omega|_{B^{n}(\frac{1}{i+1})}|_{C^{0}} \leq \beta_{i-1},\ i=0,1,2.\dots;$
		\item [3)] $\omega|_{B^{n}(2)\setminus B^{n}(1)} =1$.
	\end{itemize} 
\end{lem}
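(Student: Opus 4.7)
The plan is to reduce to a one-dimensional construction by seeking a radial function $\omega(x) = \rho(\|x\|)$ with $\rho:[0,2]\to[0,1]$ a $C^\infty$ non-decreasing profile satisfying $\rho(0)=0$, $\rho>0$ on $(0,2]$, $\rho\equiv 1$ on $[1,2]$, and the discrete bound $\rho\bigl(1/(i+1)\bigr)\le \beta_{i-1}$ for every $i\ge 0$ (which together with monotonicity yields condition 2)). I would first fix a $C^\infty$ template transition $\chi:\mathbb{R}\to[0,1]$ with $\chi\equiv 0$ on $(-\infty,0]$, $\chi\equiv 1$ on $[1,\infty)$, strictly increasing on $(0,1)$, and flat at both endpoints in the sense $\chi^{(k)}(0)=\chi^{(k)}(1)=0$ for every $k\ge 1$; such $\chi$ is standard, obtainable from the classical $e^{-1/t}$ construction. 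Next choose values $v_0 := 1$ and $v_i := \min\bigl(\beta_{i-1},\, e^{-2^i}\bigr)$ for $i\ge 1$, so that $v_i\to 0$ super-polynomially while still $v_i\le \beta_{i-1}$.

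On each interval $I_i := [1/(i+1),\,1/i]$, $i\ge 1$, define
$$\rho(r) \;=\; v_{i+1} + (v_i - v_{i+1})\,\chi\!\left(\frac{r - 1/(i+1)}{1/i - 1/(i+1)}\right),$$
and set $\rho\equiv 1$ on $[1,2]$, $\rho(0):=0$. Because all derivatives of $\chi$ vanish at $0$ and $1$, the one-sided derivatives of $\rho$ of every order match (and equal $0$) at each junction $r = 1/i$, so $\rho$ is $C^\infty$ on $(0,2]$. Since on $[0,\,1/(i+1)]$ the profile is assembled only from the pieces indexed by $j\ge i+1$, one has $\rho \le v_{i+1} \le \beta_{i} \le \beta_{i-1}$ there, which gives the sup-norm bound required in 2); properties 1) and 3) are immediate from $v_i>0$ and from $\rho\equiv 1$ on $[1,2]$.

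The main obstacle is $C^\infty$ regularity at the origin, since the widths $|I_i| = 1/\bigl(i(i+1)\bigr)$ force the rescaled derivative bounds to blow up. On $I_i$ one estimates
$$\sup_{r\in I_i}\bigl|\rho^{(k)}(r)\bigr| \;\le\; (v_i - v_{i+1})\,\|\chi^{(k)}\|_\infty\,|I_i|^{-k} \;\le\; C_k\,\bigl(i(i+1)\bigr)^k\, e^{-2^i},$$
which tends to $0$ as $i\to\infty$. Hence $\rho^{(k)}(r)\to 0$ as $r\to 0^+$ for every $k$; a routine induction using the mean value theorem (together with $\rho(r)\to 0$) then yields $\rho^{(k)}(0)=0$ for all $k$. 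Consequently $\omega(x) = \rho(\|x\|)$ vanishes to infinite order at the origin, so it is $C^\infty$ on all of $B^n(2)$ (smooth off $0$ because $\|\cdot\|$ is, and smooth at $0$ because $\omega$ beats every $\|x\|^k$). The key point is that choosing $v_i$ to decay faster than any polynomial is what neutralizes the polynomial blow-up in $|I_i|^{-k}$, and this is always compatible with $v_i \le \beta_{i-1}$ no matter how slowly the given $\beta_i$ decay.
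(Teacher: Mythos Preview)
The paper does not prove this lemma; it is quoted as Lemma~3.1 of \cite{S-Y-Z} and used as a black box, so there is no in-paper argument to compare against. Your radial construction via a flat-ended transition template $\chi$ and super-exponentially decaying levels $v_i$ is the natural approach, and your derivative estimate on $I_i$ together with the MVT induction cleanly handles $C^\infty$ regularity of $\rho$ at the origin (the point being, as you say, that $|I_i|^{-k}$ grows only polynomially in $i$ while $v_i$ was forced below $e^{-2^i}$).

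There is, however, an indexing slip that produces an actual discontinuity. On $I_1=[1/2,1]$ your formula gives $\rho(1)=v_2+(v_1-v_2)\chi(1)=v_1=\min(\beta_0,e^{-2})<1$, yet you set $\rho\equiv 1$ on $[1,2]$; so $\rho$ jumps at $r=1$, and the value $v_0:=1$ you introduced is never used. The one-line fix is to shift indices: on $I_i$ interpolate from $v_i$ at the left endpoint $1/(i+1)$ to $v_{i-1}$ at the right endpoint $1/i$, i.e.\ replace the pair $(v_{i+1},v_i)$ by $(v_i,v_{i-1})$ in your display. Then $\rho(1)=v_0=1$ matches the plateau, the gluing at each node $1/i$ is unchanged, the derivative bound becomes $(v_{i-1}-v_i)\,\|\chi^{(k)}\|_\infty\,|I_i|^{-k}\le C_k\,(i(i+1))^k e^{-2^{\,i-1}}\to 0$, and condition~2) reads $\sup_{[0,\,1/(i+1))}\rho\le v_i\le\beta_{i-1}$ directly. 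With this correction your argument is complete.
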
 

Recall that the point $p =(x_0,0)$ is a point fixed in condition ({\bfseries H}). Without loss of generality, we can assume the existence of a coordinate chart $(\tilde{V}, \xi)$
of $\Omega$ satisfying the following:
\begin{enumerate}
	\item There is an open set $V \subset \Omega$ such that $p\in V$ and $\overline{V} \subset \tilde{V}$.
	\item  $\xi(p) = 0,\ \xi(V) = B^{5}(1), \ \xi(\tilde{V}) = B^{5}(2)$.
	\item There is $i_1 \in \mathbb{N}$ such that $$\overline{\pi(B_{M}(x_0,1/i_1)\times \{0\})} \subset V$$
	and $$\xi(\pi(B_{M}(x_0, 1/i_1)\times \{0\})) \subset \mathcal{R} = \{x = (x_1,x_2,x_3,x_4,x_5) : x_5 =0\}.$$
	\item There is $i_2\in \mathbb{N}$ such that $B_{\Omega}(p,1/i_2) \subset V$
	and $\xi(B_{\Omega}(p,1/i) ) = B^{5}(1/i)$ for any integer $i> i_2.$ 
\end{enumerate}

Under these assumptions, there is an integer $i_3 >i_2$ with the property that for any $i \geq 0$ there is $0 < l_{i_3 + i} \ll 1$ such that
\[
\overline{\pi\left(B_{M}(x_0,\frac{1}{i_3 +i})\times [-l_{i_3+i},0])\right)} \subset B_{\Omega}(p, \frac{1}{i_2 +i}).
\]

Set $i_0 = \max\{i_1, i_2, i_3\}$. For any $i > i_0$, by Lemma \ref{lenth}, there exists $L(\frac{1}{i})$ such that for any ergodic measure $\tau$ of $f$ it follows that 
\[
\tau\big(B_{M}(f^{-1}(x_0),\frac{1}{i})\big) \geq \frac{1}{L(\frac{1}{i})}>0.
\]
Set $\delta(i) = \frac{1}{L(\frac{1}{i})}$. We define 
$$\beta_{-1}=1,  \,\, \beta_{i-1}:= \frac{l_{i_0 + i}}{i_0 +i} \cdot \delta ({i_0 + i}),\,\,\,\,\forall  i \geq 1.$$
By Lemma \ref{exifunofpr}, one can find a $C^{\infty}$ function $\omega: \xi(\tilde{V}) \rightarrow [0,1]$ satisfying the following properties:
\begin{enumerate}
	\item $\omega|_{\xi(\tilde{V}\setminus V)} \equiv 1$;
	\item $|\omega|_{B^{5}(\frac{1}{i_0 + i})}|_{C^{0}} \leq \beta_{i-1}$;
	\item $\omega(0) =0$ and $0 < \omega(a) \leq 1$ for $0 \neq a \in \xi(\tilde{V})$.
\end{enumerate}
 
We then define a function $\alpha \in C^{\infty}(\Omega,[0,1])$ as follows:
\begin{equation}\label{defalpha}
	\alpha(q)=\left\{
	\begin{aligned}
		\omega \circ \xi(q) & , & q\in \tilde{V}\\
		1 & , & q \notin \tilde{V}.
	\end{aligned}
	\right.
\end{equation}
Let $Y= \alpha X$ and $\phi _t$ be the flow induced by $Y$.

For any $x\in M,$ define $\gamma(x)$ by:
	\begin{equation}
			\left\{
			\begin{aligned}
				&\phi_{\gamma(x)}\big((x,0)\big) = \psi_{1}\big(x, 0\big) = \big(f(x),0\big)\quad &(x,0) \neq \psi_{1}^{-1}(p) \text{ or } (x,0) \neq p; \\
				&\gamma(x) = +\infty & (x,0) = \psi _{1}^{-1}(p) \text{ or } (x,0) = p.
				\end{aligned}
		\right.
		\end{equation}
For any ergodic measure $\mu$ of $f$, denote by $E_{\mu} = \int_{M}\gamma(y) d \mu(y)$.

The following lemma provides a sufficient condition for a system to  have only atomic  measures.

\begin{lem}[Corollary 2.12 of \cite{S-Y-Z}] \label{sufficientcondition}
	If $E_{\mu}(\gamma) = + \infty$ for all non-atomic ergodic measure $\mu$ of $f$, then $\phi_t$ has only atomic invariant probability measures. 
\end{lem}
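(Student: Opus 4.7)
The plan is to invoke the Ambrose--Kakutani cross-section correspondence to attach to any $\phi_t$-invariant Borel probability measure $\nu$ on $\Omega$ an $f$-invariant Borel measure $\mu^{\ast}$ on $M$ weighted by the return time $\gamma$, and then to exploit the probability normalisation $\nu(\Omega)=1$ to force $E_{\mu}(\gamma)<+\infty$ for some non-atomic $f$-ergodic $\mu$, contradicting the hypothesis whenever $\nu$ is not concentrated on $p$. By the ergodic decomposition theorem it is enough to treat $\phi$-ergodic $\nu$. Since $p$ is the unique zero of $Y=\alpha X$, the singleton $\{p\}$ is $\phi_t$-invariant, so $\phi$-ergodicity forces $\nu(\{p\})\in\{0,1\}$; the case $\nu(\{p\})=1$ gives $\nu=\delta_p$, which is atomic, so the substantive work is to derive a contradiction from $\nu(\{p\})=0$.

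Assuming $\nu(\{p\})=0$, I would first verify $\nu(\mathcal{O})=0$, where $\mathcal{O}$ denotes the full $\psi$-orbit of $p$. Indeed $\mathcal{O}\setminus\{p\}$ is a union of (at most two) non-closed $\phi_t$-orbits, each diffeomorphic to $\mathbb{R}$, on which $\phi_t$ acts freely by translation in the intrinsic time parameter; no $\phi$-invariant Borel probability measure can be supported on such a set. On the complement $\Omega\setminus\mathcal{O}$, the flow $\phi_t$ is a smooth time-change of $\psi_t$ sharing the same orbits, and $\Sigma_0:=(M\setminus\Orb_f(x_0))\times\{0\}$ is a global Poincar\'e section for $\phi_t$ whose first-return map is $f$ and whose first-return time is exactly $\gamma(x)=\int_0^1 ds/\alpha(\psi_s(x,0))$, the integral formula coming from Proposition~\ref{pro:3}. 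The classical cross-section construction (Kakutani representation / Palm measure) applied to $(\phi_t,\nu)$ with section $\Sigma_0$ then produces an $f$-invariant Borel measure $\mu^{\ast}$ on $M$ with
\[
\nu(A)=\int_M\!\int_0^{\gamma(x)}\!\chi_A\!\bigl(\phi_t(x,0)\bigr)\,dt\,d\mu^{\ast}(x),\qquad A\subset\Omega\setminus\mathcal{O}\text{ Borel}.
\]

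Choosing $A=\Omega\setminus\mathcal{O}$ yields $1=\nu(\Omega)=\int_M\gamma\,d\mu^{\ast}$, so $0<\mu^{\ast}(M)<+\infty$. Normalising $\mu:=\mu^{\ast}/\mu^{\ast}(M)$ gives an $f$-invariant probability measure with $E_{\mu}(\gamma)=1/\mu^{\ast}(M)<+\infty$, and the correspondence transports $\phi$-ergodicity of $\nu$ into $f$-ergodicity of $\mu$. Because $f=F_{\chi_0}$ is minimal on the non-trivial compact manifold $M=\mathbb{T}\times M_1$, it admits no periodic orbit, so every $f$-ergodic invariant probability measure is non-atomic; in particular $\mu$ is non-atomic. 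The hypothesis then asserts $E_{\mu}(\gamma)=+\infty$, contradicting the finite value obtained above. Thus $\nu(\{p\})=0$ is impossible, the only $\phi$-ergodic probability measure is $\delta_p$, and ergodic decomposition shows that every $\phi_t$-invariant probability measure equals $\delta_p$, in particular is atomic.

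The main obstacle I anticipate is the technical implementation of the cross-section correspondence when $\gamma$ is allowed to be $\mu$-non-integrable a priori; the standard suspension-to-flow construction requires an integrable roof function. The key leverage is that I use the correspondence in the reverse direction (flow to base), where no a priori integrability is needed: the condition $\nu(\Omega)<\infty$ itself forces $\int\gamma\,d\mu^{\ast}<\infty$. Carefully showing that $\nu$ charges neither $\{p\}$ nor the exceptional orbit $\mathcal{O}$, and that the resulting $\mu^{\ast}$ is $f$-invariant and inherits ergodicity, is a routine but not entirely trivial bookkeeping.
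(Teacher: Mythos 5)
The paper itself does not prove this lemma; it is imported verbatim as Corollary~2.12 of \cite{S-Y-Z} with no argument supplied, so there is no in-paper proof to compare against. Your proposed proof is a correct and natural route: reduce by ergodic decomposition to $\phi$-ergodic $\nu$; dispose of $\nu=\delta_p$; show $\nu$ cannot charge the exceptional $\psi$-orbit $\mathcal{O}$ of $p$ (each component of $\mathcal{O}\setminus\{p\}$ is a single non-closed $\phi$-orbit, a free $\mathbb{R}$-action with no finite invariant measure); disintegrate $\nu$ over the transversal $\Sigma_0=(M\setminus\Orb_f(x_0))\times\{0\}$ via the Ambrose--Kakutani cross-section construction to produce an $f$-invariant Palm measure $\mu^{\ast}$ with $\int_M\gamma\,d\mu^{\ast}=\nu(\Omega)=1$; normalise, using $\gamma\geq 1$ (which holds because $\alpha\leq 1$) to get $0<\mu^{\ast}(M)\leq 1$; and derive the contradiction $E_\mu(\gamma)<\infty$ for a non-atomic $f$-ergodic $\mu$. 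Your identification $\gamma(x)=\int_0^1 ds/\alpha(\psi_s(x,0))$ is the correct reading of Proposition~\ref{pro:3} after swapping the roles of the original and time-changed flows.

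One remark worth flagging: at the step \emph{``every $f$-ergodic probability measure is non-atomic''} you invoke minimality of $f=F_{\chi_0}$. This is indeed valid in the paper's concrete construction, but note that it is strictly more than what the bare hypothesis of the lemma provides. As stated, the lemma is silent about atomic $f$-ergodic measures, and if $f$ had a periodic orbit disjoint from $\Orb_f(x_0)$ the conclusion would actually fail (the suspension over that periodic orbit carries a non-atomic $\phi_t$-invariant measure). So the lemma as quoted from \cite{S-Y-Z} is implicitly relying on $f$ being aperiodic; your proof makes that dependence explicit, which is correct practice here but means the argument proves the lemma in the paper's setting rather than the literal general statement. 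Other than this (unavoidable) appeal to context, the proposal is sound.
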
 

Observe that for $x\in B_{M}(f^{-1}(x_0), \frac{1}{i_2+i})$, we have
\[
l_{i_0 + i} = \int_{t(x)}^{\gamma(x)} \sqrt{<\alpha(\phi_s(x))X(\phi_s(x)), \alpha(\phi_s(x))X(\phi_s(x))>} ds,
\]
where $t(x)>0$ and $\phi_{t(x)} (x) = \psi_{1-l_{i_0 +i}}(x)$. Then 
\[
\gamma(x) \geq \gamma(x) - t(x) \geq \frac{l_{i_0 + i}}{|\alpha|_{B_{\Omega}(p, \frac{1}{i_2 + i})}|\,\norm{X}} \geq \frac{l_{i_0 +i}}{\beta_{i-1}\norm{X}} = \frac{i_0 +i}{\delta(i_0 +i)\norm{X}}
\]
for any $x\in B_{M}(f^{-1}(x_0), \frac{1}{i_0 + i}).$ Therefore
\[
\gamma|_{B_{M}(f^{-1}(x_0), \frac{1}{i_2+i})} \geq \frac{i_0 +i}{\delta(i_0 +i)\norm{X}}.
\]

Now for any ergodic measure $\tau$, we have
\[
E_{\tau}(\gamma) = \int_{M}\gamma(y) d \tau(y) \geq  \frac{i_0 +i}{\delta(i_0 +i)\norm{X}} \tau(B_{M}(f^{-1}(x_0),\frac{1}{i_0 +i}) \geq  \frac{i_0 +i}{\norm{X}} \rightarrow \infty,
\]
as $i \rightarrow \infty$. By Lemma \ref{sufficientcondition}, all ergodic measures of $\phi$ is atomic. Thus  $\phi$ has Dirac measure at $p$ as its unique ergodic measure. 
Since the original system $f: M \rightarrow M$ is minimal and thus $\psi$ is minimal,  the $\phi$ orbit of any point outside $Orb(p, \psi)$  will  return to  arbitrarily small   neighborhood of the point $p$ infinite times, which implies that $\phi$ is transitive. So $\phi$ is topologically chaotic but statistically trival.

{\bf Step 2.}\, Conected set of vector fields   which are  topologically chaotic but statistically trivial.

Denote ~$\Omega$~ in Step 1 by ~$M$~ and denote ~$Y$~in  Equation  (\ref {defalpha})    by ~$X.$~ 
Denote by $\phi$ the flow indeced by $X.$

Recall  ${\cal{T}}^r(M)$ denote the set of  $C^r$ vector fields which are topologically chaotic but statistically trivial.
Then  $X\in {\cal{T}}^r(M),\,\, r\geq 1$ by Step 1. 
Denote $m = \dim M=5$. Recall  by $B^m(k)$  we denote the open ball in $\mathbb{R}^m$ with radius $k$ centered at $0$. Take a local card $(\tilde{V},\xi)$ around $p=(x_0, 0)$ 
in condition ({\bfseries H})
satisfying the following properties:
\begin{itemize}
\item[(1)] There is an open set $V$ in $M$ such that
$$p\in V\subseteq \overline{V}\subseteq\tilde{V};$$
\item[(2)]$\xi(p)=0,\,\xi(V)=B^m(1),\,\xi(\tilde{V})=B^m(2);$
\item[(3)]$D\xi\circ X\circ D\xi^{-1}$ is a system of differential equations
$$
\left\{
\begin{aligned}
\frac{dx_1}{dt}&=&f_1(x),\\
&\cdots&  \\
\frac{dx_m}{dt}&=&f_m(x),
\end{aligned}
\right.
$$ where $x\in B^m(2)$.
\end{itemize}

Given an integer $\ell> 1$, take $\beta_{2\ell}\in C^\infty(\mathbb{R},\mathbb{R})$ such that

\begin{align*}
\left\{
\begin{aligned}
\beta_{2\ell}(\tau)&=\tau^{2\ell},& &\tau\in(-\frac{1}{2},\frac{1}{2})\\
\beta_{2\ell}(\tau)&=1,& &\tau\in\mathbb{R}\setminus(-1,1).
\end{aligned}
\right.
\end{align*}

Set $\alpha_{2\ell}: \mathbb{R}^m\to\mathbb{R}$, $\alpha_{2\ell}(x) =\beta_{2\ell}(\|x\|)$, and set $g_i^{2\ell}(x) = \alpha_{2\ell}(x)f_i(x)$ and denote the system of differential equations
$$
\left\{
\begin{aligned}
\frac{dx_1}{dt}&=&g_1^{2\ell}(x),\\
&\cdots&  \\
\frac{dx_m}{dt}&=&g_m^{2\ell}(x),
\end{aligned}
\right.
$$
where $x\in B^m(2)$ by $Y'_{2\ell}$, $\ell=1,2,3,\cdots$. From the construction ~$Y'_2,\,Y'_4, Y'_6, \cdots$~ are linearly independent. If we denote
$$
Y_{2\ell}(x):=\left\{
\begin{aligned}
&D\xi^{-1}(Y'_{2\ell}),& &x\in\tilde{V}\\
&X(x),& &x\in M\setminus\tilde{V},
\end{aligned}
\right.
$$
then $Y_2,\,Y_4, Y_6, \cdots$ are linearly independent. 
For $i = 1, \cdots, m$, take a $C^\infty$ function $\alpha'_i:\,\mathbb{R}^m\to \mathbb{R}$ such that
\begin{itemize}
\item[(1)] $0\leq\alpha'_i\leq\alpha_{2\ell}(x)$ for some $\ell$; and 
\item[(2)] $\alpha'_i(x)=0\iff x=0.$ 
\end{itemize}
Set
$$Y'=(\alpha'_1(x)f_1(x),\cdots,\alpha'_m(x)f_m(x))^{tr}.$$
Let ${\cal{A}}_1(X)$ denote the set of all vector fields defined as follows:
$$
Y(x):=\left\{
\begin{aligned}
&D\xi^{-1}(Y'),& &x\in\tilde{V}\\
&X(x),& &x\in M\setminus\tilde{V}.
\end{aligned}
\right.
$$
By taking arbitrarily ~$\alpha'=(\alpha_1',\cdots, \alpha'_m)$~the set ~${\cal{A}}_1(X)$~ is  infinite. It is clear that
$Y_{2\ell}\in{\cal{A}}_1(X)$ for $\ell = 1, 2, 3, \cdots$ and thus  $\dim{\cal{A}}_1(X) = \infty$.
Now we take a family of $C^\infty$ functions $\beta^\theta:\,\mathbb{R}\to\mathbb{R}$, $\theta\in(0, 1)$ such that
\begin{itemize}
\item[(1)] 
$$
\left\{
\begin{aligned}
\beta^\theta(\tau)&<\tau^{2},& &\tau\in(-\frac{\theta}{2},\frac{\theta}{2})\\
\beta^\theta(\tau)&=1,& &\tau\in\mathbb{R}\setminus(-\theta,\theta);
\end{aligned}
\right.
$$
\item[(2)] $\beta^\theta(\tau)=0\iff  \tau=0$;
\item[(3)]	$\beta^{\theta_1}(\tau)<\beta^{\theta_2}(\tau),\,\forall \ \theta_1<\theta_2,\,\tau\in(-\frac{\theta_1}{2},\frac{\theta_1}{2})$;
\item[(4)] $\lim\limits_{\tau\to0}\lim\limits_{\theta\to0}\beta^\theta(\tau)=1$.
\end{itemize}
The existence of such family is clear. It is also clear that
$$
\lim_{\theta\to0}\beta^\theta(\tau)=\left\{
\begin{aligned}
0, \quad &\tau=0\\
1, \quad&\tau\neq0.
\end{aligned}
\right.
$$

Set $\alpha^\theta(x)=\beta^\theta(\|x\|)$ and $Y'^\theta(x)=(\alpha^\theta(x)f_1(x),\cdots,\alpha^\theta(x)f_m(x))^{tr},$ $x\in\mathbb{R}^m$ and define 
$$
Y^\theta(x):=\left\{
\begin{aligned}
&D\xi^{-1}(Y'^\theta),& &x\in\tilde{V}\\
&X(x),& &x\in M\setminus\tilde{V}.
\end{aligned}
\right.
$$

From the construction, it is clear that $Y^\theta\in {\cal{A}}_1(X)$. 
Denote by ~${\cal{A}}_2(X)   $~the set consisting of $Y^\theta(x), $ $\theta\in (0,1).$ 
From $Y^\theta$ we get the vector field $X$ in Step 1  by letting $\theta\to 0$, that is, $X = Y^0$. Denote ${\cal{A}}(X) = {\cal{A}}_2(X)\cup\{X\}$. Then ${\cal{A}}(X)$ is
a connected set with  $\infty$ dimension.

Next we show that ${\cal{A}}(X) \subset {\cal{T}}^r(M)$. For given $Y\in  {\cal{A}}(X),$
let $\psi$ be the flow induced by $Y$. Since $X$ is topologically chaotic and statistically trivial,  by Lemma \ref{lem.zhou}, its minimal attracting center is exact $\{p\}$. So
\begin{align}
\lim_{\delta\to0}\lim_{t\to+\infty}\frac{1}{t}\int_{0}^t\chi_{B(p,\delta)}(\phi_sx)ds=1,\,\forall x\in M. \label{est1}
\end{align}

We need two claims.

\begin{claim}\label{claim. hit time }
	Let  $\psi$ be the  flow induced by $Y\in {\cal{A}}(X) .$ For any given $\delta_2>0$ small, we can pick  $0< \delta_1 < \delta_{2}$  	
	such that for any point in $\partial B(p,\delta_1)$, its first hitting time to $\partial B(p,\delta_{2})$ is always greater than $2$.  
\end{claim}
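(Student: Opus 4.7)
The plan is to exploit that $p$ is a singularity of every $Y\in{\cal{A}}(X)$ at which $Y$ vanishes super-linearly, so the flow $\psi$ is extremely slow near $p$. Concretely, if $M_{\delta_2}:=\sup_{x\in\overline{B(p,\delta_2)}}\|Y(x)\|$ is much smaller than $\delta_2$, then any $\psi$-trajectory that passes from $\partial B(p,\delta_1)$ to $\partial B(p,\delta_2)$ must traverse a Riemannian distance $\ge\delta_2-\delta_1$ at pointwise speed $\le M_{\delta_2}$, so the elapsed time is at least $(\delta_2-\delta_1)/M_{\delta_2}$.

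First I will check that $Y(p)=0$. In the chart $(\tilde V,\xi)$, $Y$ coincides on $\tilde V$ either with the Step 1 field $X$ (whose local expression involves $\alpha=\omega\circ\xi$ with $\omega(0)=0$) or with $Y^\theta$, whose local expression $\alpha^\theta(x)(f_1,\dots,f_m)$ vanishes at $0$ because $\beta^\theta(0)=0$. Hence $p$ is a singularity of $\psi$ in every case.

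Next I will quantify the rate. Using property $(4)$ of the chart, for $\delta_2>0$ small one has
\[
M_{\delta_2}\;\le\;C\sup_{\|u\|\le c\delta_2}\alpha^{*}(u),
\]
with $c,C>0$ depending only on the chart and on $\|(f_1,\dots,f_m)\|_{C^0}$, and $\alpha^{*}$ the relevant cutoff. For $Y=Y^\theta$ with $\delta_2<\theta/(2c)$, property $(1)$ of $\beta^\theta$ yields $\alpha^\theta(u)<\|u\|^2$, so $M_{\delta_2}=O(\delta_2^2)$. For $Y=X$, Lemma \ref{exifunofpr}(2) combined with the choice $\beta_{i-1}=l_{i_0+i}\,\delta(i_0+i)/(i_0+i)$ (for which $l_{i_0+i}\,\delta(i_0+i)\to 0$) gives $M_{\delta_2}=o(\delta_2)$ as $\delta_2\to0$. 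In either case there exists $\delta_2^{*}=\delta_2^{*}(Y)>0$ such that $M_{\delta_2}<\delta_2/8$ whenever $0<\delta_2<\delta_2^{*}$.

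Finally, for such $\delta_2$ I set $\delta_1:=\delta_2/2$. If the orbit of $x\in\partial B(p,\delta_1)$ ever reaches $\partial B(p,\delta_2)$, the elapsed time is at least $(\delta_2/2)/(\delta_2/8)=4>2$; otherwise the first hitting time is $+\infty$ by convention, which also exceeds $2$. I do not anticipate a serious obstacle; the only bookkeeping is the Euclidean/Riemannian comparison inside $\tilde V$, which is uniformly bi-Lipschitz on $\overline V$ and is absorbed harmlessly into the constants $c$ and $C$ above.
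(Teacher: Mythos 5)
Your argument is correct and follows the same essential route as the paper's proof: bound the traversal time from below by the distance $\delta_2-\delta_1$ divided by the supremum of $\|Y\|$ on $\overline{B(p,\delta_2)}$, and then use that $Y$ vanishes (super-linearly) at $p$ so that this supremum is $o(\delta_2)$. Your version is in fact a little cleaner than the paper's sketch: you replace the mean-value step with an explicit supremum bound, distinguish the two cases $Y=Y^{\theta}$ and $Y=X$ to quantify the vanishing rate, and correctly note the decay is polynomial (the paper's phrase ``goes to zero exponentially as $\tau$ going to zero'' is imprecise); also, as you observe, the constraint is really on $\delta_2$ being small (so that $M_{\delta_2}<\delta_2/8$), after which $\delta_1=\delta_2/2$ works.
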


{\bf Proof of claim  \ref {claim. hit time }   }

Denote ~$\| Y\|=\sup_{x\in M} \| Y(x)\|.  $~ Then  ~$\| Y\|>0.$~ For given $t_1 ,\,\, t_2$ small with $0<t_2-t_1<\frac 12,$ 
take ~$x\in \partial B(p,\delta_1) $~ and denote ~$s$~ the first  hiting time on  $\partial B(p,\delta_{2}),$ that is, ~$   \psi_s(x) \in \partial B(p,\delta_{2})    $~ and ~$ \psi_t(x) \not\in \partial B(p,\delta_{2}),\,\, 0<t<s.   $~ 
Then
$$ \delta_2-\delta_1\leq dist (x, \psi_s(x)     ) = \int_0^s \| Y(\psi_t(x))   \|    \, dt =    \| Y(\psi_r(  x))   \|  s, \,\,\,\, \text{ where } \,\, 0<r<s,  $$
$$ s\geq \frac { \delta_2-\delta_1  } { \| Y(\psi_r(  x))   \|    }.     $$
By construction  $ \| Y (\psi_r(  x))    \|$ goes to zero    exponentially  as $\tau$   going to zero, 
so $s$ is bigger than 2 for small $t_1 ,\,\, t_2$.\\

\begin{claim}\label{claim.attracting center}
Let $\psi$ be  the flow induced by $Y   \in  {\cal{A}}(X) $. We have 
\begin{align}
\lim_{\delta\to0}\liminf_{t\to+\infty}\frac{1}{t}\int_{0}^t\chi_{B(p,\delta)}(\psi_sx)ds=1,\,\forall x\in M. \label{est2}
\end{align}
\end{claim}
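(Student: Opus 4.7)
The plan is to reduce \eqref{est2} for $\psi$ to the already-established \eqref{est1} for $\phi$ via the time reparameterization relating the two flows. When $Y=X$ the claim is exactly \eqref{est1}, so I may assume $Y\in\mathcal{A}_2(X)$. In the chart $(\tilde V,\xi)$ one has $Y=\alpha^{\theta}(\xi(\cdot))\,X$, so globally $Y=\rho X$ for a smooth function $\rho\colon M\to[0,1]$ vanishing only at $p$ and equal to $1$ outside a neighborhood of $p$. In particular $p$ is the unique fixed point of $\psi$, and the case $x=p$ is trivial. Hence fix $x\in M\setminus\{p\}$, whose $\phi$-orbit (equivalently $\psi$-orbit) avoids $p$ for all time.

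By Proposition \ref{pro:3}, $\psi_{\theta(x,t)}(x)=\phi_t(x)$ with $\theta(x,t)=\int_0^t \rho(\phi_s x)^{-1}\,ds$. Substituting $u=\theta(x,t)$ in the $\psi$-time average and writing $T=\theta(x,S)$ yields
\begin{equation*}
\frac{1}{T}\int_0^T \chi_{B(p,\delta)}(\psi_u x)\,du
\;=\;1\;-\;\frac{\int_0^S \chi_{M\setminus B(p,\delta)}(\phi_t x)\,\rho(\phi_t x)^{-1}\,dt}{\int_0^S \rho(\phi_t x)^{-1}\,dt}.
\end{equation*}
Let $c(\delta):=\inf\{\rho(q):q\in M\setminus B(p,\delta)\}>0$ and $J_S(\delta):=\int_0^S \chi_{M\setminus B(p,\delta)}(\phi_t x)\,dt$. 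On the support of the numerator's integrand one has $\rho\geq c(\delta)$, while $\rho\leq 1$ everywhere gives the denominator a lower bound $S$. Hence the bad fraction is bounded above by $J_S(\delta)/(c(\delta)\,S)$.

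Now \eqref{est1} combined with the monotonicity $J_S(\delta)\leq J_S(\delta')$ for $\delta'<\delta$ implies $J_S(\delta)/S\to 0$ as $S\to\infty$ for \emph{every} fixed $\delta>0$. Moreover $\theta(x,\cdot)\colon[0,\infty)\to[0,\infty)$ is a homeomorphism, since $\rho(\phi_t x)>0$ for every $t$ and $\int_0^{\infty}\rho(\phi_t x)^{-1}\,dt\geq\int_0^{\infty}\,dt=\infty$, so $S\to\infty$ if and only if $T\to\infty$. Fixing $\delta>0$ and taking $T\to\infty$ therefore drives the bad fraction to $0$, giving $\liminf_{T\to\infty} T^{-1}\int_0^T \chi_{B(p,\delta)}(\psi_u x)\,du=1$, and the final limit $\delta\to 0$ yields \eqref{est2}.

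The main obstacle is the order of limits: $c(\delta)\to 0$ as $\delta\to 0$, so naively sending $\delta\to 0$ before $S\to\infty$ is fatal. The argument sidesteps this by first fixing $\delta>0$ (hence fixing $c(\delta)>0$), then exploiting the decay of $J_S(\delta)/S$ to beat the constant $1/c(\delta)$, and only afterwards taking $\delta\to 0$. Claim \ref{claim. hit time } is the geometric counterpart of the same phenomenon: near $p$ the vector field $Y=\rho X$ has tiny norm, so $\psi$-orbits linger for arbitrarily long times in arbitrarily small neighborhoods of $p$, which is precisely what inflates $1/\rho$ and forces the weighted ergodic average to concentrate on $B(p,\delta)$.
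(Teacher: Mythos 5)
Your proposal is correct, and it takes a genuinely different and noticeably cleaner route than the paper. Both proofs hinge on the time-reparameterization $\phi_t(x)=\psi_{\theta(x,t)}(x)$ with $\theta(x,t)=\int_0^t\|X(\phi_s x)\|/\|Y(\phi_s x)\|\,ds$ and on the known property \eqref{est1} of $\phi$, but after that the strategies diverge. You substitute $u=\theta(x,t)$ directly in the $\psi$-time average, converting it into a $\rho^{-1}$-weighted $\phi$-average, and then bound the ``bad fraction'' by $J_S(\delta)/(c(\delta)S)$ using two simple facts: $\rho\geq c(\delta)>0$ off $B(p,\delta)$, and $\rho\leq 1$ gives $\int_0^S\rho^{-1}\geq S$. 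The remaining ingredient, that $J_S(\delta)/S\to 0$ for each \emph{fixed} $\delta>0$, is correctly extracted from \eqref{est1} via the monotonicity $J_S(\delta)\leq J_S(\delta')$ for $\delta'<\delta$, so the order-of-limits pitfall you flag is genuinely avoided; in fact your argument yields the stronger statement $\lim_{t\to\infty}\frac{1}{t}\int_0^t\chi_{B(p,\delta)}(\psi_s x)\,ds=1$ for every $\delta>0$. The paper instead decomposes the $\phi$-orbit into entry/exit times $a_k,b_k$ for the two balls $B(p,\delta_1)\subset B(p,\delta_2)$, uses the cocycle property to get the linear upper bound \eqref{est5} on $\theta$ over segments staying outside $B(p,\delta_1)$, uses $\|Y\|\leq\|X\|$ for the lower bound \eqref{est4}, and reassembles the $\liminf$ along the subsequence $\theta(x,b_{2n+1})$, with Claim~\ref{claim. hit time } ensuring each excursion lasts long enough for the estimate \eqref{est5} to apply. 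What your route buys is brevity, no need for Claim~\ref{claim. hit time } or the $a_k,b_k$ bookkeeping, and the stronger per-$\delta$ limit; what the paper's route buys is that it never needs the pointwise representation $Y=\rho X$ or the explicit density $\theta'=\rho^{-1}$, only the cocycle property and continuity of $\theta$ away from $p$.
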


{\bf Proof  of claim \ref{claim.attracting center}}


If $x=p$, the conclusion naturally holds. So we will prove the claim in the case of $x\in M\setminus \{p\}$.

Pick  $0< \delta_1 < \delta_{2}$ such that for any point in $\partial B(p,\delta_1)$, its first hit time to $\partial B(p,\delta_{2})$ is always greater than $2$. If there is $s_0$ such that
$$\psi_{s_0+t}(x)\in B(p,\delta_{2}), \forall t>0,$$
then (\ref{est2}) holds  by taking ~$\delta_2$~small in  claim \ref {claim. hit time }. So we will focus on the case that the orbit $\{\psi_t(x)\}$ enters $B(p,\delta_{1})$     and leaves  $B(p,\delta_{2})$ for infinitely many times. 

By Lemma \ref{lem:1} there exists a reparameterized time $\theta(x,t)$ transfing $\phi$ to $\psi$ which is continuous in $ M\setminus \{p\}$. According to the definition, see  Equation $(\ref{defalpha})$,  $Y(\psi)$ preserves the orbit of $X(\phi)$ and keeps its orientation. So the transfer map $\pi$ is the identity.  Consequently, we have 
$$\phi_{t}(x) = \psi_{\theta(x,t)}(x),$$ 
that is, $\psi$ is a time-changed flow from $\phi$ by reparameterization $\theta(x,t).$
Due to Equation $(\ref{defalpha})$, it holds that $\|Y(x)\|\leq\|X(x)\|$ for each $Y\in  {\cal{A}}(X)$. Combining with Proposition \ref{pro:3}, we have
\begin{align}
\liminf\limits_{t\rightarrow \infty} \frac{\theta(x,t)}{t} \geq 1. \label{est4}
\end{align}
 By the continuity of $\theta$, there exists $C>0$ such that $\theta(x,t) \leq C$ for  $0\leq t \leq 2$ and $x\in M\setminus B(p,\delta_1)$. For any real number $\tau \geq 2$, write $\tau = r+l$, where $r\in \mathbb{N}$ and $1< l < 2.$ If $\phi_{[0,\tau)}(x)\subset M\setminus B(p,\delta_1)$, then
\begin{align}
\theta(x,\tau) = \theta(x,1) +\theta(\phi_{1}(x),1) +\cdots+\theta(\phi_{r-1}(x),1)+
 \theta(\phi_{r}(x), l) \leq C(r+1) \leq C\tau. \label{est5}
\end{align}

For $x\in M\setminus \overline{B}(p,\delta_{2})$, take $a_1$ as its first hit time to $ \partial B(p,\delta_2)$, that is $$\phi_{[0,a_1)}(x) \cap B(p,\delta_2) = \emptyset \quad \text{ and } \phi_{a_1}(x) \in \partial B(p,\delta_2).$$ From (\ref{est1}), we know the orbit starts at $\phi_{a_1}(x)$ will stay in $B(p,\delta_1)$ for ``most of the time". Thus there is $b_1 > a_1$ such that $$\phi_{[a_1,b_1)} (x)\cap B(p,\delta_1) = \emptyset\quad \text{ and }\quad \phi_{b_1}(x) \in \partial B(p,\delta_1).$$  By transitivity, there exists $b_2 > b_1$ such that $$\phi_{[b_1,b_2]}(x) \subset \overline{B}(p,\delta_1)\quad \text{ and }\quad \phi_{b_2 + r}(x) \cap B(p,\delta_1) = \emptyset\quad \text{ for }r>0 \text{ small}.$$  We also can
take $a_2>b_2$ such that $$\phi_{[b_2,a_2]} \subset \overline{B}(p,\delta_{2})\text{ and }\phi_{(a_2,a_2+r']} \cap B(p,\delta_{2}) = \emptyset\text{ for }r'>0 \text{ small}.$$ Take $a_3>a_2$ such that $$\phi_{[a_2,a_3)} \cap B(p,\delta_2) = \emptyset\quad \text{ and }\quad \phi_{a_3}(x) \in \partial B(p,\delta_2).$$ Take $b_3>a_3$ such that $$\phi_{[a_3,b_3)} \cap B(p,\delta_1) = \emptyset\quad \text{ and }\quad \phi_{b_3}(x) \in \partial B(p,\delta_1).$$ Take $b_4 >a_3$ such that  $$\phi_{[b_3,b_4]}(x) \subset \overline{B}(p,\delta_1)\quad \text{ and }\quad \phi_{b_4 + r''}(x) \cap B(p,\delta_1) = \emptyset \text{ for } r''>0\text{ small}.$$  Take $a_4 > b_4$ such that
$$\phi_{[b_4,a_4]} \subset \overline{B}(p,\delta_{2})\quad \text{ and }\quad \phi_{(a_4,a_4+r''']} \cap B(p,\delta_{2}) = \emptyset \text{ for } r'''>0\text{ small}.$$ The above argument demonstrates that once the existence of $a_1, b_1, b_2, a_2$ is established, we can then choose appropriate $a_3, b_3, b_4, a_4$. By repeating this procedure, we can have the following sequence
\[
a_1 < b_1 <b_2<a_2 < a_3 < b_3<b_4<a_4 < a_5<b_5<b_6<a_6<\cdots
\] 
such that $\phi_{[a_{2k-1}, a_{2k}]}(x) \subset \xoverline{B}(p,\delta_{2})$ and 
$\phi_{(a_{2k}, a_{2k+1})}(x) \cap  \xoverline{B}(p,\delta_{2}) = \emptyset$ and $\phi_{[b_{2k-1}, b_{2k}]}(x) \subset \xoverline{B}(p,\delta_{1})$
and $\phi_{(b_{2k}, b_{2k+1})}(x) \cap \xoverline{B}(p,\delta_1) = \emptyset$, $k\geq 1$.

\vspace*{2pt}

\begin{center}\
	\tikzset{every picture/.style={line width=0.95pt}} 
\begin{tikzpicture}[x=0.55pt,y=0.55pt,yscale=-1,xscale=1]
	
	\draw   (225,137) .. controls (225,83.43) and (268.43,40) .. (322,40) .. controls (375.57,40) and (419,83.43) .. (419,137) .. controls (419,190.57) and (375.57,234) .. (322,234) .. controls (268.43,234) and (225,190.57) .. (225,137) -- cycle ;
	\draw   (275.48,137) .. controls (275.48,110.76) and (296.76,89.48) .. (323,89.48) .. controls (349.24,89.48) and (370.52,110.76) .. (370.52,137) .. controls (370.52,163.24) and (349.24,184.52) .. (323,184.52) .. controls (296.76,184.52) and (275.48,163.24) .. (275.48,137) -- cycle ;
	\draw    (325.33,161.15) .. controls (330.67,168.48) and (331.67,197.82) .. (347.67,206.97) .. controls (363.67,216.12) and (451.89,247.23) .. (470.17,213.47) .. controls (488.44,179.71) and (363.67,122.97) .. (345.67,132.97) .. controls (327.67,142.97) and (312.67,113.97) .. (335.17,102.47) .. controls (357.67,90.97) and (373.33,68.94) .. (388.17,50.47) .. controls (392.05,45.63) and (395.48,40.62) .. (399.67,35.99) .. controls (411.45,22.97) and (429.24,13.06) .. (480.17,18.97) ;
	
	\draw (252.56,130.6) node [anchor=north west][inner sep=0.75pt]  [rotate=-0.48] [align=left] {$\displaystyle \delta _{1}$};
	\draw (207.56,86.93) node [anchor=north west][inner sep=0.75pt]  [rotate=-0.48] [align=left] {$\displaystyle \delta _{2}$};
	\draw (397.06,48.93) node [anchor=north west][inner sep=0.75pt]  [font=\scriptsize,color={rgb, 255:red, 208; green, 2; blue, 27 }  ,opacity=1 ,rotate=-0.48] [align=left] {$\displaystyle \phi _{a_{1}}( x)$};
	\draw (312.89,68.93) node [anchor=north west][inner sep=0.75pt]  [font=\scriptsize,color={rgb, 255:red, 74; green, 144; blue, 226 }  ,opacity=1 ,rotate=-0.48] [align=left] {$\displaystyle \phi _{b_{1}}( x)$};
	\draw (371.89,118.43) node [anchor=north west][inner sep=0.75pt]  [font=\scriptsize,color={rgb, 255:red, 74; green, 144; blue, 226 }  ,opacity=1 ,rotate=-0.48] [align=left] {$\displaystyle \phi _{b_{2}}( x)$};
	\draw (440.06,146.93) node [anchor=north west][inner sep=0.75pt]  [font=\scriptsize,color={rgb, 255:red, 208; green, 2; blue, 27 }  ,opacity=1 ,rotate=-0.48] [align=left] {$\displaystyle \phi _{a_{2}}( x)$};
	\draw (371.06,233.43) node [anchor=north west][inner sep=0.75pt]  [font=\scriptsize,color={rgb, 255:red, 208; green, 2; blue, 27 }  ,opacity=1 ,rotate=-0.48] [align=left] {$\displaystyle \phi _{a_{3}}( x)$};
	\draw (346.39,181.93) node [anchor=north west][inner sep=0.75pt]  [font=\scriptsize,color={rgb, 255:red, 74; green, 144; blue, 226 }  ,opacity=1 ,rotate=-0.48] [align=left] {$\displaystyle \phi _{b_{3}}( x)$};
	\draw (228.5,257) node [anchor=north west][inner sep=0.75pt]   [align=left] {{\bf Fig 1.}  Choice of $\displaystyle a_{1} ,\ b_{1} ,\ b_{2} ,a_{2} \ \cdots $};
	
	\fill [color={rgb, 255:red, 208; green, 2; blue, 27 }] (381,60) circle (1.2pt);
	\fill [color={rgb, 255:red, 74; green, 144; blue, 226}] (346,95) circle (1.2pt);
		\fill [color={rgb, 255:red, 74; green, 144; blue, 226}] (370,135) circle (1.2pt);
			\fill [color={rgb, 255:red, 208; green, 2; blue, 27 }] (418,155) circle (1.2pt);
			\fill [color={rgb, 255:red, 208; green, 2; blue, 27 }] (375,218) circle (1.2pt);
\fill [color={rgb, 255:red, 74; green, 144; blue, 226}] (332.5,182.5) circle (1.2pt);

	\draw (417.06,8.53) node [anchor=north west][inner sep=0.75pt]  [font=\scriptsize,color={rgb, 225:red, 58; green, 12; blue, 57 }  ,opacity=1 ,rotate=-0.48] [align=left] {$\displaystyle x$};
		\fill [color={rgb, 255:red, 14; green, 14; blue, 26}] (417,24) circle (1.2pt);
	\draw (305,137) node [anchor=north west][inner sep=0.75pt]  [font=\scriptsize,color={rgb, 225:red, 38; green, 52; blue, 57 }  ,opacity=1 ,rotate=-0.48] [align=left] {$\displaystyle p$};
	\fill [color={rgb, 255:red, 74; green, 14; blue, 26}] (323.5,137) circle (1.2pt);

\draw[-] (400.1,34.5)--(404, 27.5);
\draw[-] (400.1,34.5)--(407, 34.5);
\end{tikzpicture}
\end{center}

Notice that 
\[
\phi_{a_k}(x) = \psi_{\theta(x,a_k)}(x), \ \phi_{b_k}(x) = \psi_{\theta(x,b_k)}(x), \ \forall k\geq 1.
\]
By (\ref{est5}), we have
\begin{align}
\theta(x, b_{2k+1}) - \theta(x,b_{2k})  = \theta(\phi_{b_{2k}}(x), b_{2k+1} -b_{2k}) \leq C (b_{2k+1} -b_{2k}), \forall k\geq 1. \label{est6}
\end{align}
For any $x\neq  p$, we  pick  small $\delta_1$ and $\delta_{2}$. By the definition of the sequence 
$$\{a_1,b_1,b_2,a_2,a_3, b_3, b_4, a_5,\cdots\},$$  we have
\begin{align}
&\,\,\,\,\,\,  \liminf_{t\to +\infty}\frac {1}{t} \int_0^t \chi _{B(p,\delta_1)}(\psi_sx) ds\\ 
&= \liminf_{n\to +\infty}\frac {1}{\theta(x,b_{2n+1})} \int_0^{\theta(x,b_{2n+1})} \chi _{B(p,\delta_1)}(\psi_sx) ds\notag \\
&=1- \limsup\limits_{n\rightarrow \infty}\frac{\theta(x,b_3) - \theta(x,b_2) + \cdots + \theta(x,b_{2n+1})-\theta(x,b_{2n}) }{\theta(x,b_{2n+1})}\notag\\
\end{align}
Hence, by (\ref{est6}), it holds that

\begin{align}
	&\,\,\,\, | \liminf_{t\to +\infty}\frac {1}{t} \int_0^t \chi _{B(p,\delta_1)}(\psi_sx) ds -1|\\
&\leq\limsup\limits_{n\rightarrow \infty}\frac{C(b_3-b_2 + \cdots + b_{2n+1} - b_{2n})}{\theta(x,b_{2n+1})}\notag\\
&= \limsup\limits_{n\rightarrow \infty} \frac{C(b_3-b_2 + \cdots + b_{2n+1} - b_{2n})}{b_{2n+1}} \cdot\frac{b_{2n+1}}{\theta(x,b_{2n+1})}\notag
\end{align}
Since $\phi$ and $\psi$  share the same orbit, it holds that 
$\phi_{(b_{2k}, b_{2k+1})}(x) \cap \xoverline{B}(p,\delta_1) = \emptyset,\,\,\,\,k\geq 1$. 
By using (\ref{est1})  we have 
$$\limsup\limits_{n\rightarrow \infty} \frac{b_3-b_2 + \cdots + b_{2n+1} - b_{2n}}{b_{2n+1}}$$
tends  to zero, provided $\delta_1\to 0.$ 
Thus combining with  (\ref{est4}), we have
\begin{align*}
\lim_{\delta\to0}\liminf_{t\to+\infty}\frac{1}{t}\int_{0}^t\chi_{B(p,\delta)}(\psi_sx)ds=1,\,\forall x\in M. \label{est2}
\end{align*}
Then we have completed the proof of Claim \ref{claim.attracting center}.

Claim \ref{claim.attracting center} states that the singleton set $\{p\}$ is a minimal  attracting center for $\psi$. By Lemma \ref{lem.zhou}, $Y$ is statistically trivial. Observe
that the orbits of $Y$ and $X$ coincide  and $X$ is transitive, so $Y$ is topologically transitive  as well as $X$. Hence we have proved that every $Y\in {\cal{A}}(X)$ is  topologically chaotic and statistically trivial.

\hfill $\blacksquare$

\begin{rk}

A similar argument works for the case when 
$\mathcal {M}_{\phi, erg}= $ \{{\itshape finitely many  fixed points}\}.
We propose a  problem:  
whether the result is true or not in the situation that $\mathcal{M}_{\phi, erg}$ = \{{\itshape  infinitely  many  fixed points}\}?

\end{rk}




\section*{Acknowledgment}
Liang is supported by NSFC (\#12271538, \#12071328, \#11871487) and Program for Innovation Research in Central University in Finance and Economics. Ren is supported by NSFC (\#11801261, \# 12071285) and Ren is supported by  Key Laboratory of Nonlinear Analysis and its Applications (Chongqing University), Ministry of Education and also Ren would like to thank the Key Laboratory of Mathematics and Applied Mathematics, Peking University for the support while he was visiting School of Mathematical Sciences, Peking University;  Sun is supported by FAPSP and NSFC (\# 12090010, \# 12093157012, \# 11831001). Vargas is supported by FAPSP.

\bibliographystyle{amsplain}

{}

{}

{}

{}

\end{document}\documentclass[amssymb,10pt]{article}